\newcommand{\be}{\begin{equation}}
\newcommand{\ee}{\end{equation}}
\newcommand{\ba}{\begin{array}}
\newcommand{\ea}{\end{array}}
\newcommand{\comment}[1]{}
\newcommand{\B}[1]{\mathbf{#1}}
\newenvironment{code1}{%
                           \mathcode`\:="603A  
                           \def\colon{\mathchar"303A}
                           \par
                           \upshape
                           \begin{list} 
                              {} {\leftmargin = 0.0cm}
                           \item[]
                           \begin{tabbing}
                              \hspace*{.3in} \= \hspace*{.3in} \=
                              \hspace*{.3in} \= \hspace*{.3in} \=
                                                                                                                                                                                                     \hspace*{.3in} \= \hspace*{.3in} \= \kill
                          }{\end{tabbing}\end{list}}
\newcounter{master}
\newcounter{enumcnt}
\newtheorem{corollary}{Corollary}
\newtheorem{definition}{Definition}
\newtheorem{thm}{Theorem}
\newtheorem{lem}[thm]{Lemma}
\begin{document}
\title{Compression  of  unitary  rank--structured matrices  to CMV-like  shape with an application 
to polynomial rootfinding\thanks{This work was partially supported by GNCS-INDAM,  grant ''Equazioni e funzioni di Matrici''}}

\author{Roberto Bevilacqua,  Gianna M. Del Corso and Luca Gemignani\thanks{Universit\`a di Pisa, Dipartimento di Informatica, Largo Pontecorvo, 3, 56127 Pisa, Italy, email: \{bevilacq, delcorso, l.gemignani\}@di.unipi.it}}

\date{June, 2, 2013}
\maketitle

\begin{abstract}
This paper is concerned with the reduction of a unitary matrix $U$  to CMV-like  shape.
A   Lanczos--type  algorithm is presented which carries out the reduction   by computing
the block tridiagonal form of the Hermitian part of $U$, i.e., of the matrix $U+U^H$.
By elaborating on  the Lanczos approach  we also
propose an alternative algorithm using elementary matrices  which is numerically stable.
If $U$ is rank--structured then the same property holds for its Hermitian part and, therefore,
the block tridiagonalization process can be  performed using the rank--structured matrix
technology with reduced complexity.  Our interest
in the  CMV-like reduction is motivated by the unitary  and almost unitary eigenvalue problem.
In this respect,  finally,  we discuss the application of the CMV-like
reduction for the  design of fast companion eigensolvers based on the customary
 QR iteration.

\end{abstract}

\noindent{\small{\bf Keywords}
CMV matrices,  unitary matrices, rank--structured matrices, block tridiagonal reduction, QR iteration,
block Lanczos algorithm, complexity
{\bf MSC}
65F15
}

\newcommand{\bs}{\mbox{\rm {block--span}}}
\newcommand{\rank}{\mbox{\rm {rank }}}

\section{Introduction}
\setcounter{equation}{0}
In a recent paper \cite{KN}, Killip and Nenciu conduct a systematic study
of the properties of a class of unitary matrices --named  CMV matrices from the
 names of the researchers who introduced  the class in \cite{CMV}--.
The rationale of the paper emphasized in the title is that CMV matrices are the unitary
analogue  of  Hermitian
tridiagonal Jacobi matrices. It is  interesting to consider
 potential analogies from the point of view of
numerical computations since  the Hermitian tridiagonal structure provides a very efficient and compressed
representation of Hermitian matrices especially for eigenvalue computation.
To our knowledge the first fundamental contribution  along this line was provided in
the paper \cite{BE91} where some algorithms are presented for the reduction of a unitary matrix to a
CMV-like form and, moreover, the properties of this form under the QR process are investigated. In \cite{VW13} 
a more general framework is proposed, which includes  the CMV-like form as a particular instance.


Our interest in the CMV reduction of unitary matrices stems from the research of numerically
efficient  eigenvalue algorithms  for unitary and almost unitary rank--structured matrices.
 The structural properties of the classical Hessenberg reduction of a unitary matrix
 were extensively exploited for  fast eigenvalue computation in some pioneering papers by Gragg and coauthors
\cite{Gragg3,Gragg2,Gragg1}. More recently,  it has been realized that these properties are related with the
rank structures  of a Hessenberg unitary matrix and, moreover, these structures are invariant under the QR
iteration  thus yielding  a large variety of possible  fast adaptations  of this iterative method
 for eigenvalue computation of unitary and almost unitary  matrices \cite{BEGG_simax,BEGG_mc,DVB}.
A common issue of all these
approaches  is that rank--structured matrices are generally represented in a data--sparse format  and the
accurate update of this format under the iterative process can be logically complicated and time consuming.
Additionally, these difficulties are magnified  for block matrices where the size of the condensed representation
typically increases.
So the point of
view taken in this paper is that, for numerical and computational efficiency of
eigensolvers,
 it would be better to replace the data-sparse format with a sparser  representation
employing directly the matrix entries.  The CMV reduction naturally lead
to a sparser form.

This paper is aimed to explore  the mechanism underlying the reduction
of a general unitary matrix $U\in \mathbb C^{n\times n}$  into  some CMV-like  form.
It is shown
that
an  origin of such mechanism can be found in the property that under some
mild assumption the matrices
$U_H\colon =U +U^H$ and $U_{AH}\colon =U -U^H$ can be simultaneously reduced into
block tridiagonal form by
a unitary congruence, i.e., $U_H\rightarrow T_H=Q^H U_H Q$ and
$U_{AH}\rightarrow T_{AH}=Q^H U_{AH} Q$.  From this it follows that the same transformation acts on
the matrix $U$ by performing the reduction into block tridiagonal form, that is,  $U\rightarrow
\displaystyle\frac{T_H +T_{AH}}{2}=Q^H U Q$. A careful look at the out-of-diagonal blocks of
$\displaystyle\frac{T_H +T_{AH}}{2}$ reveals that
these are of rank one at most by showing   the desired  CMV-like  shape of  the matrix.
The complexity of the block tridiagonal reduction  is generally $O(n^3)$.
However there are some interesting structures which can be
exploited. In particular, if $U$ is rank-structured then $U+U^H$ is also rank-structured and
Hermitian so that  the fast techniques  developed in  \cite{EGG} might
 be incorporated  in the algorithm  by achieving a
quadratic complexity.

The paper is organized as follows.
In Section 2, we describe and analyze a block Lanczos procedure for
 the reduction of
$U_H$ and simultaneously of $U_{AH}$ into block tridiagonal form.
It is also shown that
this procedure is  at the core of our CMV--ification algorithm for unitary matrices.
In Section 3  by elaborating on the Lanczos approach  we
propose a different CMV reduction algorithm in Householder
style using elementary matrices
and comment on  the cost
analysis of the algorithm in the case  of rank--structured inputs.
In Section 4  we discuss the application of the
CMV-like representation of a unitary matrix   in  the design of fast eigensolvers  for
unitary and almost unitary matrices.
Finally, in Section 5
 the conclusion and  further developments are drawn.

\section{Derivation of the Algorithm}
\setcounter{equation}{0}

Let $U\in \mathbb C^{n\times n}$ denote  a unitary matrix.  In this section we consider the  problem of reducing
 $U$ into
a sparse form referred  to as CMV--like shape.

\begin{definition}\label{cmv-like}
We say that an $n\times n$ unitary matrix $U$ has {\it CMV-like}  shape  or, for short, is a
CMV-like matrix
if it is block tridiagonal, that is,
\[
U=\left[\begin{array}{ccccc} U_{1} & \tilde U_{1} \\\hat U_{1} & \ddots &  \ddots \\ & \ddots & \ddots & \tilde U_{p-1}\\
& & \hat U_{p-1} & U_p\end{array}\right],
\]
where $U_{k}\in \mathbb C^{i_k\times i_k}$, $1\leq k\leq p$, $i_1=\ldots=i_{p-1}=2$ and $i_p\in \{1,2\}$
depending on the parity of $n$, and, moreover, both the superdiagonal and the subdiagonal blocks
are matrices of rank one at most.
\end{definition}

In the generic case a CMV--like matrix $U$ can be converted into a matrix with the CMV--shape described  in
Definition 1.2 in \cite{KN} by means of a unitary diagonal congruence. However, our class is a bit more general
since it includes  the direct sum of CMV shaped matrices together with some more specific matrices. For instance,
the $4\times 4$ unitary matrix $U$ given by
\[
U=\left[ \begin{array}{cccc} 0 & 1& 0 &0  \\ \cos{\theta}& 0& 0 & \sin{\theta} \\
-\sin{\theta} & 0&0 & \cos{\theta}\\ 0 & 0& 1& 0 \end{array} \right], \quad (\theta \in \mathbb R),
\]
is not CMV shaped according to the definition in \cite{KN} but it is CMV-like and it can be
reduced into a pure CMV
shape by a unitary congruence.

In order to investigate the reduction of a unitary matrix $U$ into  CMV--like shape   let us introduce
  the  Hermitian  and the anti-hermitian
part of $U$ defined by $U_H\colon =\displaystyle\frac{U+U^H}{2}$ and
 $U_{AH}=\displaystyle\frac{U-U^H}{2}$ , respectively. If $U$ has at least three distinct eigenvalues then
 the minimal polyanalytic polynomial
$p(z)$ of $U$ is $p(z)=z \bar z-1$, that is, $p(z)$ is the minimal (w.r.t. a fixed order)  degree polynomial in
$z$ and $\bar z$ such that $p(U)=0$.   Observe that
\begin{equation}\label{eqrew}
z \bar z-1= \frac{(z+\bar z)z}{2}-\frac{(z-\bar z)z}{2}-1.
\end{equation}

A block variant of the customary Lanczos method can be used to compute a
block tridiagonal reduction of the Hermitian matrix $U_H$.
The process is defined by the choice of
the initial approximating subspace $\mathcal D_0=<\B z, \B w>$.
A  simple version of the algorithm is given below.

\bigskip
{\footnotesize{
\framebox{\parbox{8.0cm}{
\begin{code1}
{\bf Procedure} {\bf Block\_Lanczos }\\
{\bf Input}: $U_H$, $D_0=[\B z|\B w]$;\\
\ $[G,R,V]={\bf svd}(D_0)$; $s={\bf rank}(R)$; \\
\ $Q(:, 1:s)=G(:,1:s)$; $s0=1, s1=s$; \\
\ {\bf while} $s1<n$\\
\quad  $W=U_H \cdot Q(:, s0:s1)$; $T(s0:s1, s0:s1)= (Q(:, s0:s1))^H\cdot U_H \cdot Q(:, s0:s1)$; \\
\quad   {\bf if} $s0=1$\\
\quad \quad $W=W-Q(:,s0:s1)\cdot T(s0:s1, s0:s1)$;\\
\quad  {\bf else}\\
\quad \quad $W=W-Q(:,s0:s1)\cdot T(s0:s1, s0:s1)$; \\
\quad \quad $W=W-Q(:,\hat s0:\hat s1)\cdot T(\hat s0:\hat s1, s0:s1);$\\
\quad  {\bf end}\\
\quad  $[G,R,V]={\bf svd}(W)$; $snew={\bf rank}(R)$; \\
\quad {\bf if} $snew=0$\\
\quad \quad {\bf disp}('{\rm premature} {\rm stop}'); {\bf return}; \\
\quad {\bf else}\\
\quad  \quad $R=R(1:snew, 1:snew)\cdot (V(:,1:s))^H$; \\
\quad \quad  $\hat s0=s0, \hat s1=s1, s0=s1+1, s1=s1+snew$; \\
\quad \quad $Q(:, s0:s1)=G(:,1:snew)$, $T(s0:s1, \hat s0:\hat s1)=R(1:snew, 1:s)$;\\
\quad \quad $T(\hat s0:\hat s1, s0:s1)=(T(s0:s1, \hat s0:\hat s1))^H$, $s=snew$;\\
\quad {\bf end}\\
\ {\bf end}\\
\ $T(s0:s1,s0:s1)=(Q(:,s0:s1))^H*U_H*Q(:,s0:s1)$;
\end{code1}}}}}
\bigskip

If the procedure {\bf Block\_Lanczos} terminates without premature stop
then at the very end the unitary matrix $Q$ transforms  $U_H$ into the Hermitian block
tridiagonal  matrix $T_H=Q^H \cdot U_H \cdot Q$, where
\[
T_H=\left[\begin{array}{cccc} A_1 & B_1^H\\B_1 & A_2&\ddots \\
& \ddots& \ddots &B_{p-1}^H\\ & & B_{p-1}&A_p\end{array}\right],
\]
with $A_k\in \mathbb C^{i_k\times i_k}$, $B_k\in \mathbb C^{i_{k+1}\times i_k}$,
and $2\geq i_k\geq i_{k+1}$, $i_1+\ldots =i_p=n$. The sequence $\{i_k\}_{k=0}^p$ ($i_0=1$)
is formed from the values attained by the variable $s$ in the procedure {\bf Block\_Lanczos}.

Observe that from $Q\cdot T_H=U_H \cdot  Q$  it follows  inductively that
for the subspaces
\[
\begin{array}{ll}
\mathcal B_j(U_H, Q(:, 1:i_1))=\bs \{Q(:, 1:i_1), U_H\cdot Q(:, 1:i_1), \ldots,U_H^{j-1}\cdot Q(:, 1:i_1)\}\\
\colon =\{\sum_{k=0}^{j-1} U_H^{k} Q(:, 1:i_1)\Psi_k, \ \Psi_k\in \mathbb C^{i_1\times i_1}\}
\end{array}
\]
and
\[
\begin{array}{ll}
\mathcal Q_j=\bs \{Q(:, 1:i_1), Q(:, i_1+1:i_1+i_2), \ldots, Q(:, \sum_{k=1}^{j-1}i_{k}+1: \sum_{k=1}^{j}i_{k})\}\\
\colon =\{\sum_{k=0}^{j-1} Q(:, \sum_{\ell=0}^{k}i_{\ell}+1:
\sum_{\ell=0}^{k+1}i_{\ell})\Phi_k, \ \Phi_k\in \mathbb C^{i_{k+1}\times i_{1}}\}
\end{array}
\]
it holds
\[
\mathcal B_j(U_H, Q(:, 1:i_1)) \subseteq \mathcal Q_j, \quad j\geq 1.
\]
Moreover, since
\begin{equation}\label{plus}
U_H^{j-1}\cdot Q(:, 1:i_1)=
Q(:, \sum_{k=1}^{j-1} i_{k}+1: \sum_{k=1}^{j}i_{k}) B_{j-1} \cdots B_1 + \sum_{\ell= 1}^{j-2}
Q(:, \sum_{k=1}^{\ell-1} i_{k}+1: \sum_{k=1}^{\ell}i_{k}) \Gamma_{\ell},
\end{equation}
and $ B_{k-1} \cdots B_1\in \mathbb C^{i_k\times i_1}$ is of maximal rank $i_k$
since the blocks $B_i$ are obtained with the {{\bf Block\_Lanczos} procedure.  Then
we obtain
\[
\mathcal B_j(U_H, Q(:, 1:i_1))= \mathcal Q_j, \quad j\geq 1.
\]
Conversely,  this relation implies that the matrix $Q^H \cdot U_H \cdot Q$ is block upper
Hessenberg and therefore block-tridiagonal.

Now let us  consider the matrix $H\colon =Q^H \cdot U_{AH} \cdot Q$.
We are going to investigate the structure of this matrix under the additional assumption
that $\B w=U \B z$.  Differently speaking, we suppose that  the initial approximating
subspace $\bs \{Q(:, 1:i_1)\}$  satisfies $\bs \{Q(:, 1:i_1)\}=\bs \{D_0\}$,
with  $D_0=\left[\B z, U\B z\right]$ depending on just one
vector.

In the following lemma we prove that if the {\bf Block\_Lanczos} procedure does not occur in a  breakdown, the size of the blocks $A_k$ and $B_k$ is $2$, with the  exception of the last block in the case $n$ is odd.

\begin{lem}\label{mainc1}
Let us consider the  algorithm {\bf Block\_Lanczos} applied to
$U_H$  with the initial vectors $D_0=[\B z| U \B z]$, $\B z\in \mathbb C^n$ and
 assume that the set of  vectors, for $j\ge 1$,  $U_H^{j-1} \B z,U_H^{j-1}  U \B z $  yields a basis of the whole space
$\mathbb C^n$.  Then the unitary matrix $Q$ satisfies \eqref{teq} with $i_1=\ldots=i_{p-1}=2$
and $i_p\in \{1,2\}$ depending on the parity of $n$.
\end{lem}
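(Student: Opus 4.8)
The plan is to show that the block sizes $i_k$ produced by the Lanczos procedure are all equal to $2$ (except possibly the last), by exploiting the specific structure of the initial block $D_0 = [\B z \mid U\B z]$. The key observation is that while the Lanczos procedure nominally starts with a block of size $i_1 = 2$ (assuming $\B z$ and $U\B z$ are linearly independent), the new direction introduced at each step can only increase the dimension by at most the incoming block size. So the real content is that the block sizes cannot \emph{shrink} below $2$ prematurely, and cannot grow above $2$: in other words, each residual $W$ after orthogonalization has rank exactly $2$ until the final step, where parity forces a rank-$1$ (odd $n$) or rank-$2$ (even $n$) tail.

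**First I would** make precise the relationship between $U_H = (U+U^H)/2$ and $U$ acting on the initial vector $\B z$. The crucial algebraic fact is the rewriting \eqref{eqrew}, $z\bar z - 1 = \frac{(z+\bar z)z}{2} - \frac{(z-\bar z)z}{2} - 1$, which at the level of the matrix reads $U\cdot U^H = I$, i.e. $\frac{(U+U^H)U}{2} - \frac{(U-U^H)U}{2} = I$, so $U_H \cdot U = I + U_{AH}\cdot U$ in an appropriate sense; more usefully, $U^H = 2U_H - U$, hence $U^{-1}\B z = U^H \B z = 2U_H\B z - U\B z$. This identity is the engine: it says that applying $U_H$ to the starting block $[\B z \mid U\B z]$ produces vectors that stay inside the Krylov-type space generated by $U$ and $U^{-1}$ applied to $\B z$. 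I would use this to identify the Krylov space $\mathcal B_j(U_H, Q(:,1:i_1))$ from the earlier analysis with $\span\{U^{-(j-1)}\B z, \ldots, \B z, U\B z, \ldots, U^{j-1}\B z\}$ (a symmetric band of powers of $U$ around $\B z$), whose dimension grows by exactly $2$ at each step (one new positive power, one new negative power) as long as no linear dependence occurs.

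**The main step** is then to show that this growth-by-$2$ is exactly what the block sizes $i_k$ record. Since I have established $\mathcal B_j(U_H, Q(:,1:i_1)) = \mathcal Q_j$ (the span of the first $j$ Lanczos blocks) from the preceding discussion, and since $\dim \mathcal Q_j = i_1 + \cdots + i_j$, it suffices to prove that $\dim \mathcal B_j - \dim \mathcal B_{j-1}$ equals $2$ for each $j$ until the whole space is exhausted. Given the full-basis hypothesis of the lemma — that the vectors $U_H^{j-1}\B z$ and $U_H^{j-1}U\B z$ span all of $\C^n$ — each successive doubly-infinite band of $U$-powers adds precisely two fresh independent directions until saturation, at which point the parity of $n$ dictates whether the final increment is $2$ (even $n$) or $1$ (odd $n$). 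I would verify that the band $\{U^{-k}\B z\} \cup \{U^{k}\B z\}$ picks up one vector on each side per step, making $i_k = 2$ generically, with the last block absorbing the leftover single dimension when $n$ is odd.

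**The hard part will be** handling the monotonicity of block sizes imposed by the Lanczos recurrence — the algorithm enforces $i_k \ge i_{k+1}$ (the blocks can only shrink) — and reconciling it with the claim that all interior blocks equal $2$. The subtlety is ruling out an early drop to size $1$ followed by the \emph{impossibility} of growing back: once a block of size $1$ appears before the final step, monotonicity forbids returning to size $2$, which would contradict the full-basis assumption (it would mean the band stopped gaining a direction on one side while $\C^n$ was not yet filled). I would therefore argue by contradiction: suppose some interior $i_k = 1$; then the band symmetry breaks, and the relation $U^{-1}\B z = 2U_H\B z - U\B z$ combined with full rank of the $B_{k-1}\cdots B_1$ products (guaranteed by the procedure) forces a premature spanning of a proper $U$-invariant subspace, contradicting the hypothesis that the full set of vectors spans $\C^n$. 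Closing this contradiction cleanly — tracking exactly how the $B_k$ rank-drop interacts with the symmetric-band structure — is where the real care is needed.
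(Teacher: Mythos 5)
Your proposal follows essentially the same route as the paper: both identify the $U_H$--Krylov space generated by $[\B z \mid U\B z]$ with the symmetric band $\mbox{span}\{(U^H)^{j-1}\B z,\ldots,\B z,U\B z,\ldots,U^{j}\B z\}$ and then rule out an interior rank drop by arguing, via unitarity, that such a drop would make this band saturate on a proper invariant subspace, contradicting the spanning hypothesis. The step you flag as delicate is precisely the one the paper dispatches in a single line (a nontrivial dependency among the two newest band vectors, pushed forward by $U$ and $U^H$, places both $(U^H)^{j}\B z$ and $U^{j+1}\B z$ inside $\mathcal B_j$, forcing premature termination), so your plan coincides with the published argument.
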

\begin{proof}
The proof of the lemma consists in proving that the matrices $W$ generated in the procedure  {\bf Block\_Lanczos} have always rank $2$, except for the last block in the case $n$ is odd.

Assume by contradiction that at step $j$, $j<n/2$ for $n$ even, $j<n/2-1$ for $n$ odd, we have a matrix $W$  such that $\rank(W)<2$, and that in previous steps all the corresponding matrices $W$ had full rank.
The first $2j$ columns of the unitary matrix $Q$ returned by the  procedure {\bf Block\_Lanczos}, form an orthogonal basis for the  space $\mathcal B_j(U_H, Q(:, 1:i_1))$. Since $U$ is unitary it is easy to see that
$\mathcal B_j(U_H, Q(:, 1:i_1))=\mbox{span}\{ \B z, U\B z| U^H\B z, U^2\B z| \ldots,| (U^H)^{j-1}\B z, U^j\B z\}$.
Since $\rank(W)<2$ then
a nonzero linear combination of $(U^H)^{j-1} z$ and $U^j z$ must be in $\mathcal B_{j-1}(U_H, Q(:, 1:i_1))$. Due to the unitariness of $U$, this means that both $(U^H)^j\B z$ and $U^{j+1}\B z$ are in $\mathcal B_j(U_H, Q(:, 1:i_1))$. So the dimension of $\mathcal B_{j+1}(U_H, Q(:, 1:i_1))$ is $2j$ and this implies that $\rank(W)=0$, i.e. a premature termination, which is a contradiction.
\end{proof}

\begin{thm}\label{main}
Let us consider the  algorithm {\bf Block\_Lanczos} applied to
$U_H$  with the initial vectors $D_0=[\B z| U \B z]$, $\B z\in \mathbb C^n$.
Let us assume that the set of  vectors $U_H^{j-1} \B z,U_H^{j-1}  U \B z $, $j\geq 1$, yields a basis of the whole space
$\mathbb C^n$, or, equivalently, the algorithm    does not return an early termination
warning.  Then  the unitary matrix $Q\in \mathbb C^{n\times n}$  reduces simultaneously $U_H$ and
$U_{AH}$ into a block triangular form, that is,
\[
T_H\colon=Q^H U_H Q=\left[\begin{array}{cccc} A_1 & B_1^H\\B_1 & A_2&\ddots \\
& \ddots& \ddots &B_{p-1}^H\\ & & B_{p-1}&A_p\end{array}\right],
\]
and
\[
T_{AH}\colon=Q^H U_{AH}Q =\left[\begin{array}{cccc} A_1' & -B_1'^H\\B_1' & A_2'&\ddots \\
& \ddots& \ddots &-B_{p-1}'^H\\ & & B_{p-1}'&A_p'\end{array}\right]
\]
where  $A_k, A_k'\in \mathbb C^{2\times2}$, $B_k, B_k'\in \mathbb C^{2\times 2}$,
and in the case $n$ is odd, $A_p\in \mathbb C^{1\times1}$, $B_{p-1}, B_{p-1}'\in \mathbb C^{1\times 2}$.
\end{thm}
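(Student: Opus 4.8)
The plan is to treat the two reductions separately. The block tridiagonal shape of $T_H=Q^H U_H Q$, including the Hermitian pattern in which each superdiagonal block is the conjugate transpose of the corresponding subdiagonal block, is precisely what the \textbf{Block\_Lanczos} procedure produces when it runs to completion: this is exactly the content of the identity $\mathcal B_j(U_H,Q(:,1:i_1))=\mathcal Q_j$ established above. Lemma \ref{mainc1} then forces every block to have size $2$, except possibly the trailing one when $n$ is odd. Hence the only genuinely new assertion is that the \emph{same} $Q$, with the \emph{same} column partition, carries $U_{AH}$ into the displayed anti-Hermitian block tridiagonal matrix $T_{AH}$.

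The key ingredient is the explicit description of the block Krylov spaces under the special starting block $D_0=[\B z\mid U\B z]$. As recorded in the proof of Lemma \ref{mainc1}, the unitarity of $U$ (so that $U^H=U^{-1}$) gives
\[
\mathcal Q_j=\mathcal B_j(U_H,Q(:,1:i_1))=\mbox{span}\{\B z,U\B z\mid U^H\B z,U^2\B z\mid\ldots\mid (U^H)^{j-1}\B z,U^j\B z\},
\]
so that $\mathcal Q_j$ is the span of the first $\sum_{k=1}^{j}i_k$ columns of $Q$, and its two ``leading'' generators are $(U^H)^{j-1}\B z$ and $U^j\B z$.

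First I would show the inclusion $U_{AH}\,\mathcal Q_j\subseteq\mathcal Q_{j+1}$. Writing $U_{AH}=(U-U^H)/2$ and applying it to the generators above, each vector $U^k\B z$ maps into a combination of $U^{k+1}\B z$ and $U^{k-1}\B z$, while each $(U^H)^k\B z$ maps into a combination of $(U^H)^{k+1}\B z$ and $(U^H)^{k-1}\B z$. All interior generators stay inside $\mathcal Q_j$, and the two leading generators produce at most the new vectors $U^{j+1}\B z$ and $(U^H)^j\B z$, both of which lie in $\mathcal Q_{j+1}$. Consequently $H:=Q^H U_{AH}Q$ sends the $j$-th block of columns into the span of the first $j+1$ blocks, i.e. $H$ is block upper Hessenberg with respect to the partition $\{i_k\}$.

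Finally I would invoke anti-symmetry. Since $U_{AH}^H=-U_{AH}$, we have $H^H=-H$, so $H$ is anti-Hermitian; and an anti-Hermitian block upper Hessenberg matrix is necessarily block tridiagonal, its $(k,k+1)$ superdiagonal block being the negated conjugate transpose of its $(k+1,k)$ subdiagonal block. Setting $B_k'$ equal to the latter reproduces exactly the sign pattern $-(B_k')^H$ in the statement, while the diagonal blocks $A_k'$ are themselves anti-Hermitian. The partition being identical to that of $T_H$, Lemma \ref{mainc1} again yields $i_1=\cdots=i_{p-1}=2$ and $i_p\in\{1,2\}$, together with $B_{p-1},B_{p-1}'\in\mathbb C^{1\times 2}$ in the odd case. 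The main obstacle I anticipate is the boundary bookkeeping in the inclusion step: verifying that applying $U$ and $U^H$ to the two extreme generators of $\mathcal Q_j$ produces nothing outside $\mathcal Q_{j+1}$ (and, by the anti-Hermitian symmetry, nothing below the first block subdiagonal). This is where the equality $\mathcal B_j=\mathcal Q_j$ and the identity $U^H=U^{-1}$ carry the real weight.
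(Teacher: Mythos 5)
Your proof is correct, and its endgame coincides with the paper's: show that $H=Q^H U_{AH} Q$ is block upper Hessenberg with respect to the partition fixed by $T_H$, then use $H^H=-H$ to force block tridiagonality with the sign pattern $-(B_k')^H$. Where you genuinely differ is in how the Hessenberg structure is obtained. You verify the inclusion $U_{AH}\,\mathcal Q_j\subseteq\mathcal Q_{j+1}$ directly on the power basis $\{\B z,U\B z\mid U^H\B z,U^2\B z\mid\ldots\mid (U^H)^{j-1}\B z,U^j\B z\}$ of $\mathcal Q_j$, using only $U^H=U^{-1}$; this is an elementary and self-contained argument, and it is all the theorem needs. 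The paper instead works at the level of the block Krylov recursion: it establishes the single relation \eqref{deltaeq}, namely $U_{AH}Q(:,1:i_1)=U_H Q(:,1:i_1)\Delta+Q(:,1:i_1)\Gamma$ with $\Delta$ invertible (your computations for the two generators $\B z$ and $U\B z$ are exactly the paper's \eqref{oneeq} and \eqref{twoeq}), and then propagates it by induction using the commutativity of $U_H$ and $U_{AH}$ together with \eqref{plus}, arriving at the explicit leading-term identity \eqref{uahshape} and the equality $\mathcal B_j(U_{AH},Q(:,1:i_1))=\mathcal Q_j$. That extra mileage is not needed for Theorem \ref{main} itself, but the matrix $\Delta$ and the resulting formulas $B_j'=B_j B_{j-1}\cdots B_1\,\Delta\,B_1^{-1}\cdots B_{j-1}^{-1}$ are precisely what drive the rank-one statement of Corollary \ref{mainc2}; your shorter route would have to be supplemented by something equivalent to recover that later result.
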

\begin{proof}
In order to compare  the shape of $H=Q^H\, \cdot \,U_{AH}\,\cdot\, Q$ and $T_H$ we first  introduce a commensurable partitioning of $H$,
i.e.,
$H=(H_{k,\ell}), H_{k,\ell}\in \mathbb C^{2\times2}$, (possibly for odd $n$, $H_{p, \ell}\mathbb C^{1\times2}$, and $H_{p,p}\in \mathbb C$).   Clearly, we  have that
\[
\mathcal B_j(U_{AH}, Q(:, 1:i_1))=\mathcal B_j(U_{AH}, D_0), \quad j\geq 1.
\]
Let us begin by  considering  the initial step $j=1$.
We have
\begin{equation}\label{oneeq}
U_{AH} \B z=-U_H \B z +U\B z\in \bs \{Q(:, 1:i_1), Q(:, i_1+1:i_1+i_2)\}.
\end{equation}
Concerning $U_{AH} U \B z$  from \eqref{eqrew} we obtain that
\begin{equation}\label{twoeq}
U_{AH} U \B z=U_H U \B z -\B z\in \bs \{Q(:, 1:i_1), Q(:, i_1+1:i_1+i_2)\}.
\end{equation}
By using \eqref{oneeq} and \eqref{twoeq} it is found that
\begin{equation}\label{deltaeq}
U_{AH}  Q(:, 1:i_1)= U_H  Q(:, 1:i_1) \Delta +  Q(:, 1:i_1) \Gamma,
\end{equation}
for suitable matrices $\Delta, \Gamma\in \mathbb C^{i_1\times i_1}$ with
$\Delta$ of maximal rank $i_1$.
Since $U_{AH}$ and $U_H$ commute,  from \eqref{plus} and \eqref{deltaeq} by induction it follows that
\[
\mathcal B_j(U_{AH}, Q(:, 1:i_1)) \subseteq \mathcal Q_j, \quad j\geq 1,
\]
and
\begin{equation}\label{uahshape}
U_{AH}^{j-1}\cdot Q(:, 1:i_1)=
Q(:, \sum_{k=1}^{j-1} i_{k}+1: \sum_{k=1}^{j}i_{k}) B_{j-1} \cdots B_1 \Delta^{j-1}+ \sum_{\ell= 1}^{j-2}
Q(:, \sum_{k=1}^{\ell-1} i_{k}+1: \sum_{k=1}^{\ell}i_{k}) \Gamma'_{\ell},
\end{equation}
for  suitable matrices $\Gamma_{\ell}$, which says that
\[
\mathcal B_j(U_{AH}, Q(:, 1:i_1)) =\mathcal Q_j, \quad j\geq 1,
\]
and, therefore, the matrix $H$ is block  upper Hessenberg, i.e., $H_{k, \ell}=0$ for $k-1>\ell$.
On the other hand,  since
$U_{AH}$ is anti-hermitian we find that also $H$ is anti-hermitian and, hence,
$H$ is block  tridiagonal with the same shape as $T_H$. Finally, this implies that both
$Q^H\cdot U\cdot Q$ and $Q^H \cdot U^H\cdot Q$ are in block tridiagonal form with the
same shape as $H$ and $T_H$.
\end{proof}

From this result it follows that the unitary transformation induced by the matrix $Q$
has the same action   when performed on the unitary matrix $U=
\displaystyle\frac{U_H +U_{AH}}{2}\in \mathbb C^{n\times n}$. i.e,
\begin{equation}\label{teq}
T\colon=Q^H U Q= \left[\begin{array}{cccc} \hat A_1 & \tilde B_1^H\\\hat B_1 & \hat A_2&\ddots \\
& \ddots& \ddots &\tilde B_{p-1}^H\\ & & \hat B_{p-1}&\hat A_p\end{array}\right]
\end{equation}
where  $\hat A_k\in \mathbb C^{2\times 2}$, $\hat B_k=\frac{B_k + B_k'}{2}\in \mathbb C^{2\times 2},
\tilde B_k^H=\frac{B_k^H -B_k'^H}{2}\in \mathbb C^{2\times 2}$, for $k=1, \ldots, \lfloor n/2\rfloor$, 
and in the case $n$ is odd, $\hat A_p\in \mathbb C, \hat B_{p-1}, \tilde B_{p-1}\in \mathbb C^{1\times 2}$.
As proved in Lemma~\ref{mainc1}, blocks $B_k$ of $T_H$ are $2\times 2$ and have full rank.

In next step  of our investigation on the shape  of the matrix $T$ we give
a careful look at the rank of its out-of-diagonal blocks.

\begin{corollary}\label{mainc2}
Let us consider the  algorithm {\bf Block\_Lanczos} applied to
$U_H$  with the initial vectors $D_0=[\B z| U \B z]$, $\B z\in \mathbb C^n$ and
 assume that the set of vectors $U_H^{j-1} \B z, U_H^{j-1} U \B z$, $j\geq 1$, yields a basis of the whole space
$\mathbb C^n$.  Then both the subdiagonal blocks $\hat B_k$ and the superdiagonal blocks
$\tilde B_k^H$, $1\leq k\leq p-1$,  of the matrix $T=Q^H U Q$ are $2\times 2$ rank-one matrices.
\end{corollary}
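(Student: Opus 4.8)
The plan is to combine the block tridiagonal structure already established in Theorem \ref{main} with the two explicit representations of the off-diagonal blocks coming from equation \eqref{teq}, namely $\hat B_k=(B_k+B_k')/2$ and $\tilde B_k^H=(B_k^H-B_k'^H)/2$. By Lemma \ref{mainc1} each block is genuinely $2\times 2$ (with the usual exception of the last block when $n$ is odd), so the content of the statement is that these $2\times 2$ matrices are rank-deficient, i.e.\ singular. The key structural fact I would exploit is that $U$ is unitary, hence normal, and $U_H$ and $U_{AH}$ are simultaneously reduced by $Q$ to block tridiagonal form; the product $T=Q^HUQ$ is therefore itself a \emph{unitary} block tridiagonal matrix. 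I would first record that a unitary matrix which is block tridiagonal with $2\times 2$ blocks must have rank-one off-diagonal blocks, because the orthonormality of its block columns forces strong constraints on the overlaps between consecutive blocks.

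First I would make precise the link between $T$ and the minimal polyanalytic polynomial identity \eqref{eqrew}, $z\bar z-1=\tfrac12(z+\bar z)z-\tfrac12(z-\bar z)z-1$, which is what ties $U_H$, $U_{AH}$ and $U$ together on the same Krylov-type flag $\mathcal Q_j$. The concrete consequence I would extract is a relation between the subdiagonal block $B_k$ of $T_H$ and the subdiagonal block $B_k'$ of $T_{AH}$. Writing out the action of $U=U_H+U_{AH}$ restricted to the step from $\mathcal Q_j$ to $\mathcal Q_{j+1}$, and using $\B w=U\B z$ together with \eqref{deltaeq}, I expect to obtain that $B_k'=B_k\Delta_k$ for the same maximal-rank matrix $\Delta_k\in\mathbb C^{2\times 2}$ governing how $U_{AH}$ acts relative to $U_H$. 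Granting such a relation, the two off-diagonal blocks of $T$ become
\[
\hat B_k=\tfrac12 B_k(I+\Delta_k),\qquad \tilde B_k^H=\tfrac12\bigl(B_k^H-(B_k\Delta_k)^H\bigr)=\tfrac12 B_k^H(I-\Delta_k^H).
\]

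The crucial step is then to show that $I+\Delta_k$ and $I-\Delta_k$ are each singular, which forces $\hat B_k$ and $\tilde B_k$ to have rank at most one (since $B_k$ is invertible by Lemma \ref{mainc1}). This is where the unitarity of $U$ enters decisively: the polyanalytic relation $U U^H=I$ translates, on the flag, into an algebraic constraint on $\Delta_k$ that pins its eigenvalues to the unit circle and, more specifically, forces $\pm1$ to be eigenvalues in the appropriate $2\times 2$ reduction, so that $\det(I\pm\Delta_k)=0$. I would verify this by expanding $U^HU=I$ block-wise at level $k$ and reading off the diagonal block of the product, exactly as one does for ordinary CMV/unitary Hessenberg factorizations where the reflectors have determinant of modulus one.

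I expect the main obstacle to be establishing the precise relation $B_k'=B_k\Delta_k$ with a \emph{common} $\Delta_k$, and then squeezing out of unitarity that $\Delta_k$ has both $+1$ and $-1$ among its spectral data so that both $I\pm\Delta_k$ drop rank simultaneously; a priori one only knows $\Delta_k$ is of maximal rank from \eqref{deltaeq}, not its eigenvalue structure. An alternative, perhaps cleaner route that I would keep in reserve avoids computing $\Delta_k$ altogether: argue directly on $T$ itself. Since $T$ is unitary and block tridiagonal, consider two consecutive block columns and impose orthonormality; the subdiagonal block $\hat B_k$ and superdiagonal block $\tilde B_k^H$ are the only couplings between adjacent blocks, and the unitary constraint $T^HT=I$ restricted to these couplings yields $\hat B_k\tilde B_k^H$-type products whose structure, combined with the fact that the diagonal blocks $\hat A_k$ are $2\times2$, forces each off-diagonal block to be rank one. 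This second approach replaces the delicate eigenvalue bookkeeping for $\Delta_k$ by a more uniform orthogonality computation, which is likely the safer path to a complete proof.
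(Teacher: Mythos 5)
Your overall strategy coincides with the paper's: write $\hat B_k=\tfrac12(B_k+B_k')$ and $\tilde B_k=\tfrac12(B_k-B_k')$, relate $B_k'$ to $B_k$ through the matrix $\Delta$ of \eqref{deltaeq}, and reduce everything to the singularity of $I_2\pm\Delta$. But the step you yourself flag as the ``main obstacle'' --- showing that $+1$ and $-1$ are both eigenvalues of $\Delta$ --- is precisely the content of the proof, and you leave it unresolved: you only express the hope that it will drop out of a block-wise expansion of $U^HU=I$ at level $k$. The paper instead reads it off directly from the first Lanczos step: \eqref{oneeq} and \eqref{twoeq} say that $U_{AH}\B z=U_H\B z\cdot(-1)+(\hbox{terms in }\mathcal Q_1)$ and $U_{AH}U\B z=U_HU\B z\cdot(+1)+(\hbox{terms in }\mathcal Q_1)$, so in the (generally non-orthonormal) basis $[\B z\,|\,U\B z]$ the matrix $\Delta$ of \eqref{deltaeq} is exactly $\mathrm{diag}(-1,1)$; passing to the orthonormal columns $Q(:,1:2)$ turns it into $\Theta\,\mathrm{diag}(-1,1)\,\Theta^{-1}$, whence $I_2+\Delta$ and $I_2-\Delta$ are each of rank one. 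The second point you gloss over is that there is a \emph{single} $\Delta$, fixed at step one: the commutativity of $U_H$ and $U_{AH}$ combined with \eqref{plus} and \eqref{uahshape} yields $B_j'=B_j\,(B_{j-1}\cdots B_1)\,\Delta\,(B_{j-1}\cdots B_1)^{-1}$, so your $\Delta_j$ is a similarity conjugate of $\Delta$ and $I_2\pm\Delta_j$ inherit the rank drop; with Lemma \ref{mainc1} guaranteeing that the $B_j$ are invertible, the conclusion follows.

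Your ``safer path'' held in reserve is not safe: it is false that a unitary block tridiagonal matrix with $2\times2$ blocks must have rank-one off-diagonal blocks. The matrix $\left[\begin{smallmatrix}0&I_2\\ I_2&0\end{smallmatrix}\right]$, or a direct sum of such $4\times4$ unitary blocks embedded in a longer block tridiagonal profile, is unitary and block tridiagonal with off-diagonal blocks of rank two. Orthonormality of block columns (equivalently, the Nullity Theorem) only constrains the ranks of the larger overlapping submatrices such as $T(2k+1\colon n;1\colon 2k+2)$; it cannot by itself isolate the individual $2\times2$ couplings. The rank-one property genuinely depends on the choice $D_0=[\B z\,|\,U\B z]$ and on the identity \eqref{eqrew}, not on unitarity plus the sparsity pattern alone.
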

\begin{proof}
We consider only the case $n$  even, the case $n$ odd is similar.
From \eqref{oneeq} and \eqref{twoeq} it follows that
the matrix $\Delta\in \mathbb C^{2\times 2}$ in \eqref{deltaeq} can be represented as
\[
\Delta=\Theta \left[ \begin{array}{cc} -1 & \\& 1\end{array}\right] \Theta^{-1},
\]
for a suitable invertible matrix $\Theta  \in \mathbb C^{2\times 2}$. This means that $I_2+ \Delta$ and $I_2-\Delta$ are
rank-one matrices.   From
\[
U_{AH} U_H^{j-1} Q(\colon, 1:i_1)=U_H^{j-1}U_{AH} Q(\colon, 1:i_1), \quad j\leq 1,
\]
by using \eqref{plus}, \eqref{deltaeq}, \eqref{uahshape}
it is found that the matrices $B_j'$ , $1\leq j\leq p-1$, satisfy the following relations
\[
B_j'=B_j \cdot B_{j-1}\cdots B_1 \cdot \Delta \cdot B_1^{-1} \cdots B_{j-1}^{-1}, \quad 1\leq j\leq p-1.
\]
 There follows that for the  out-of-diagonal  blocks of $T$ it holds
\[
\hat B_j=\frac{B_j + B_j'}{2}=\frac{1}{2}(B_j \cdot  B_{j-1}\cdots B_1 \cdot (I_2 + \Delta) \cdot B_1^{-1} \cdots B_{j-1}^{-1}),
\quad 1\leq j\leq p-1,
\]
\[
\tilde B_j=\frac{B_j - B_j'}{2}=\frac{1}{2}(B_j \cdot  B_{j-1}\cdots B_1 \cdot (I_2 - \Delta) \cdot B_1^{-1} \cdots B_{j-1}^{-1}),
\quad 1\leq j\leq p-1,
\]
which says that $\hat B_j$  and $\tilde B_j$ are  matrices of rank one.
\end{proof}

The  Figure \ref{f1} illustrates the shape of the matrix $T$ generated by the algorithm
{\bf Block\_Lanczos} applied to the generator  of order 16
of the circulant  matrix algebra, that is, the companion matrix associated with the polynomial $z^{16}-1$.
The initial vector $\B z$ is randomly generated and the matrix $T$ is post-processed by means of a sequence of
Givens rotation matrices defined  in order to compress the rank-one structure of the out-of-diagonal blocks.

\begin{figure}
\begin{center}
\includegraphics[scale=0.60]{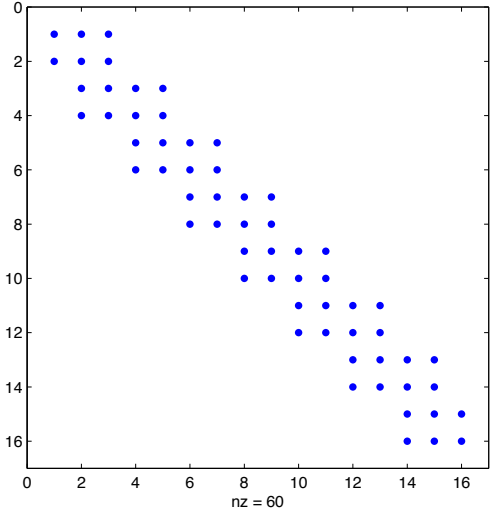}
\caption{Shape of the modified matrix $T$ generated by {\bf Block\_Lanczos}}
\label{f1}
\end{center}
\end{figure}

It is easily seen that under some genericity condition  the  modified matrix $T$
can be reduced  by a diagonal unitary congruence  into the CMV-shape
described in Definition 1.2 in \cite{KN}. This explains why  we refer to the  matrix $T$ generated
by the  {\bf Block\_Lanczos} procedure   as to a CMV--like form. Further, it is worth noticing that
the computational cost of the {\bf Block\_Lanczos}  algorithm  amounts  to  perform
a matrix-by-vector multiplication per step.
Therefore, the overall cost is $O(n c_{m})$, where $O(c_{m})$ is the cost of multiplying the matrix
$U+U^H\in \mathbb C^{n\times n}$  by a vector.
In the case of the generator of the circulant matrix algebra, since the matrix $U$  is sparse,  we get the complexity estimate  $O(n^2)$.

Some comments are however  in order here.  The first 
 one is concerned with the applicability of the {\bf Block\_Lanczos} process devised above.
Let us consider the Fourier matrix $U=\mathcal F_n=\frac{1}{\sqrt{n}}\Omega_n$ of order $n=2m$.
Due to  the relation $\Omega_n^2=n \Pi$, where $\Pi$ is a suitable symmetric permutation matrix, it is
found that for any starting vector $\B z$ the {\bf Block\_Lanczos} procedure breaks down within the first three
steps.  In this case  the reduction scheme has to be restarted and
the initial matrix can be converted into the direct sum of CMV-like blocks.

The second comment  is that the profile shown in Figure \ref{f1} does not
follow immediately from the rank-one property of the out-of-diagonal blocks.  The observation is that
after having performed the first two elementary transformations $T\rightarrow T_1=\mathcal G_1 T\mathcal G_1^H$
and $T_1\rightarrow T_2=\mathcal G_2 T_1\mathcal G_2^H$ determined so that $\hat B_1$ and $\tilde B_1^H$ are converted
in the form displayed in Figure \ref{f1} then the subdiagonal block in  position $(3,2)$ has already the desired
shape due to the unitariness of the matrix $T_2$.  In this way the reduction of $T$ can be completed by a sequence of
Givens rotations only acting on the superdiagonal blocks in such a way to preserve the zero structure.

The final observation is that the reduction into  the CMV--like shape  has also a  remarkable consequence in 
terms of rank structures of the  resulting matrix.   The property follows from a  corollary  (Corollary 3 in \cite{FM}) 
of a well known result in linear algebra referred to as 
the {\em Nullity Theorem}.

\begin{thm}[Nullity Theorem]\label{nullycons}
Suppose $A\in \mathbb C^{n\times n}$ is a nonsingular matrix and $\B \alpha$ and $\B \beta$  to be
nonempty proper subsets of $J\colon =\{1,\ldots, n\}$.  Then
\[
\rank(A^{-1}(\B \alpha; \B \beta))=\rank(A(J\setminus \B \beta; J\setminus \B \alpha)) + |\B \alpha| +|\B \beta| -n,
\]
where, as usual, $|J|$ denotes the cardinality of the set $J$.
\end{thm}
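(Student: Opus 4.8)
The plan is to reduce the statement to a block--matrix computation via permutations and then read off the rank of the relevant block of $A^{-1}$ by a kernel/image count. Write $a=|\B\alpha|$ and $b=|\B\beta|$. First I would choose permutation matrices $P$ and $Q$ that list the row indices in the order $(J\setminus\B\beta,\B\beta)$ and the column indices in the order $(J\setminus\B\alpha,\B\alpha)$, so that
\[
M:=PAQ=\begin{pmatrix} M_{11} & M_{12}\\ M_{21} & M_{22}\end{pmatrix},\qquad M_{11}=A(J\setminus\B\beta;J\setminus\B\alpha)\in\mathbb C^{(n-b)\times(n-a)}.
\]
A direct entrywise check shows that $N:=M^{-1}=(PAQ)^{-1}$ is the matrix whose $(k,\ell)$ entry is $(A^{-1})_{\tau(k),\sigma(\ell)}$, where $\sigma,\tau$ are the two orderings above; consequently its trailing block is exactly $N_{22}=A^{-1}(\B\alpha;\B\beta)\in\mathbb C^{a\times b}$. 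Since permutations do not change ranks, it suffices to prove $\rank(N_{22})=\rank(M_{11})+a+b-n$.

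Next I would characterize the image of $N_{22}$ intrinsically. Solving $M\,(x_1,x_2)^{T}=(0,y_2)^{T}$ and reading off the second block of the solution gives $N_{22}\,y_2=x_2$, and as $y_2$ ranges over $\mathbb C^{b}$ this shows
\[
\mathrm{Image}(N_{22})=\{x_2\in\mathbb C^{a}\ :\ \exists\,x_1\ \text{with}\ M_{11}x_1+M_{12}x_2=0\}=\pi_2\big(\ker[\,M_{11}\mid M_{12}\,]\big),
\]
where $\pi_2$ is the projection onto the $x_2$--coordinate. The key point is that $[\,M_{11}\mid M_{12}\,]$ is the top block row of the invertible matrix $M$, hence has full row rank $n-b$, so its kernel has dimension $n-(n-b)=b$.

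Finally I would compute $\dim\pi_2\big(\ker[\,M_{11}\mid M_{12}\,]\big)$ by rank--nullity applied to the restriction of $\pi_2$ to this kernel. The kernel of that restriction consists of the vectors $(x_1,0)$ with $M_{11}x_1=0$, i.e.\ a copy of $\ker M_{11}$, of dimension $(n-a)-\rank(M_{11})$. Therefore
\[
\rank(N_{22})=b-\big((n-a)-\rank(M_{11})\big)=\rank(M_{11})+a+b-n,
\]
which is the claimed identity.

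The main thing to get right is the bookkeeping of the permutations: one must verify that the trailing block of $M^{-1}$ really is $A^{-1}(\B\alpha;\B\beta)$ while the leading block of $M$ is the complementary submatrix $A(J\setminus\B\beta;J\setminus\B\alpha)$, since the row and column index sets cross over between $A$ and $A^{-1}$. Once this alignment is fixed, the remaining rank count is routine. An alternative route is Jacobi's classical identity expressing minors of $A^{-1}$ through complementary minors of $A$, but passing from that determinant identity to the rank statement (especially when $a\neq b$) is less direct than the kernel count above.
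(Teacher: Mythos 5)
Your proof is correct. Note, however, that the paper does not actually prove this statement: it imports it verbatim from Fiedler and Markham (Corollary 3 of \cite{FM}) and uses it as a black box, so there is no in-paper argument to compare against. Your kernel-counting route is the standard elementary proof of the nullity theorem: permute to put $A(J\setminus\B\beta;J\setminus\B\alpha)$ in the leading block of $M=PAQ$ and $A^{-1}(\B\alpha;\B\beta)$ in the trailing block of $M^{-1}$, identify $\mathrm{Image}(N_{22})$ with the projection of $\ker[\,M_{11}\mid M_{12}\,]$ onto the second coordinate block, and apply rank--nullity twice (once to the full block row of an invertible matrix to get $\dim\ker=b$, once to the restricted projection whose kernel is $\ker M_{11}$). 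All the dimension counts check out and the handling of the rectangular case $a\neq b$ is clean. One small slip: you write that the $(k,\ell)$ entry of $N=M^{-1}$ is $(A^{-1})_{\tau(k),\sigma(\ell)}$, but with $\tau$ the row ordering and $\sigma$ the column ordering of $M$ this would make the trailing block $A^{-1}(\B\beta;\B\alpha)$ of size $b\times a$; the correct formula is $N_{\ell,k}=(A^{-1})_{\sigma(\ell),\tau(k)}$, which yields the $a\times b$ block $A^{-1}(\B\alpha;\B\beta)$ that you in fact use in the rest of the argument. This is a labelling typo only --- exactly the ``crossover'' bookkeeping you flag yourself --- and does not affect the validity of the proof.
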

 
Now let $T\in \mathbb C^{n\times n}$, $n=2\ell$, 
  denote a unitary matrix given in the CMV-like form shown in  Figure \ref{f1}. 
 Set $\B \alpha=\{1,2\}$ and $\B \beta=\{5,\ldots, n\}$.
Thus, we have
\[
0=\rank(T(1\colon 2; 5\colon n))=\rank(T^H(5\colon n; 1\colon 2))=
\rank(T(3\colon n; 1\colon 4)) -2, 
\]
which implies 
\[
\rank(T(3\colon n; 1\colon 4))=2.
\]
and, therefore, 
\[
\rank(T(3\colon 4; 2\colon 3))=1
\]
whenever $T(5:6, 4)$ is nonzero. In fact, assume by contradiction that $\rank(T(3\colon 4; 2\colon 3))=2$, then since by Corollary~\ref{mainc2} $\rank(T(3\colon 4; 1\colon 2))=1$, $\rank(T(5\colon:6, 4:5))=1$ and the matrix is in CMV-like form, we obtain that $\rank(T(3:n, 1:4))= 3$ which is a contradiction.
 By similar reasonings we conclude that 
under the assumption that $T(2k+1:2(k+1), 2k)$ is nonzero for $k=2,\ldots,\ell-1$ it is obtained that 
\begin{equation}\label{rr1}
\rank(T(2k+1\colon 2(k+1); 2k\colon 2k+1))=1, \quad 1\leq k\leq \ell-2.
\end{equation}
This property plays a fundamental role in the analysis of QR--based eigensolvers performed in Section \ref{invv}.

\section{A Householder style reduction algorithm}
In this section we derive a Householder style reduction algorithm using elementary
matrices to perform the
transformation of a unitary matrix $U$ into the block tridiagonal form $T$.
 Let us recall that from \eqref{teq} the unitary matrix $U$ is  converted into the the block
tridiagonal form $T$ by using a unitary congruence defined by the unitary matrix $Q$  whose first two
columns are determined to generate the space $<\B z, U\B z>$ for a suitable $\B z\in \mathbb C^{n\times n}$.
 The rank properties of the out-of-diagonal blocks of $T$ stated in Corollary \ref{mainc2}  enables the  information
of $T$ to be further compressed by using a sequence of Givens rotations in order to arrive at the form
shown in Figure \ref{f1}.  The pattern of this sequence does not modify the first two column of $Q$.

A straightforward extension  of the Implicit Q Theorem (see \cite{MC}, Theorem 7.4.2) for block Hessenberg
 matrices indicates that essentially the same unitary matrix should be determined in any block tridiagonal
reduction process by means of  unitary transformations applied to $U+U^H$ provided that the first two columns of $Q$ span the space $<\B z, U\B z>$.  This suggests the  alternative  more stable procedure using
unitary transformations outlined below. For the sake of simplicity we consider the case where $n$ is even.

\bigskip

{\footnotesize{
\framebox{\parbox{8.0cm}{
\begin{code1}
 {\bf Procedure} {\bf Unitary\_CMV\_Reduction }\\
{\bf Input}: $U$, $D_0=[\B z|U \B z ]$;\\
\ $[G,R]={\bf qr}(D_0)$; $U=G^H U G$; $n={\bf length}(U)$;\\
\ {\bf for } $kk=1:n/2-2$\\
\quad {\bf for} $j=n:-1:2 kk+3$\\
\quad \quad     $Us=U(j-2:j, 2 kk-1:2 kk) + (U(2 kk-1:2 kk, j-2:j))^H$;\\
\quad \quad     $[Qs,Rs]={\bf qr}(Us)$; \\
 \quad \quad    $U(j-2:j, :)=Qs^H U(j-2:j,:)$;\\
 \quad \quad    $U(:, j-2:j)=U(:,j-2:j)Q_s$;\\
  \quad   {\bf end}\\
\ {\bf end}
\end{code1}}}}}

\bigskip
The complexity of this procedure is generally $O(n^3)$. However there are some interesting structures which can be
exploited. In particular, if $U$ is rank-structured then $U+U^H$ is also rank-structured and
Hermitian so that  the fast techniques  developed in  \cite{EGG} might
 be incorporated  in the algorithm above by achieving a
quadratic complexity. The resulting fast  version of   the {\bf  Unitary\_CMV\_Reduction}
algorithm will be presented elsewhere.

It is also worth noticing that  for certain input data the
algorithm above  needs to be restarted if breakdown situations are
detected.  For instance,
in Figure \ref{f2} we report the matrix  generated by the algorithm {\bf  Unitary\_CMV\_Reduction} applied to the Fourier
matrix of order 32.  The $8\times 8$ block diagonal structure of the CMV--shape corresponds with
7 restarts of the algorithm.  The 2-norm of the subdiagonal blocks corresponding to breakdowns are in the range
$[2.3e-14, 6.5e-14]$.  The deflation criterion used in our implementation  compares the 2-norm of the  considered
subdiagonal block with  the  bound $n \cdot {\rm u} \cdot \parallel U\parallel_{F}$, where  u denotes the unit roundoff.
This is justified by the fact that the Hessenberg reduction as well as  QR iterations introduce roundoff errors of
the same level \cite{Tisse}.

\begin{figure}
\begin{center}
\includegraphics[scale=0.60]{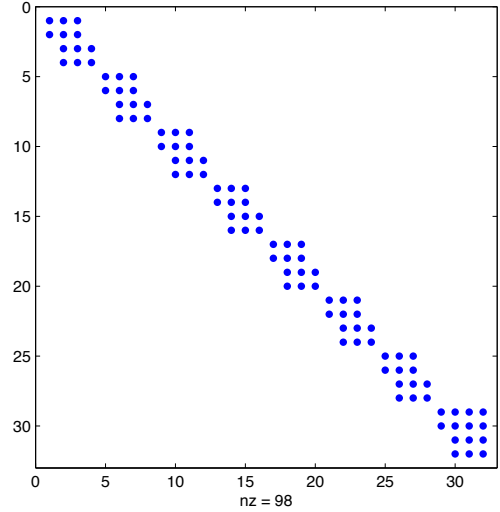}
\caption{Block CMV reduction for the Fourier matrix of size $n=32$}
\label{f2}
\end{center}
\end{figure}

\section{Invariance of the CMV-like form under the QR algorithm}\label{invv}

Our  interest towards the reduction of unitary matrices into a CMV-like form  stems from the unitary
eigenvalue  problem. In this respect it is mandatory to investigate the properties of CMV-like matrices under
the QR  algorithm which  yields  the  customary approach   for eigenvalue computation.
The invariance of the CMV-like form under the QR iteration was firstly proved in the paper \cite{BE91} using an
algorithmic approach.  The proof presented here is based on the property \eqref{rr1}
and it has the advantage of admitting an immediate extension to almost unitary matrices.

\begin{thm}\label{maininv}
Let $T_0=T\in \mathbb C^{n\times n}$, $n=2\ell$, 
 be a unitary  block tridiagonal matrix given  in  the CMV--like form shown in Figure \ref{f1}.
Moreover, let  $T_1$  denote the matrix generated from one step of the
QR algorithm with shift $\gamma$ , i.e.,
\[
\left\{ \begin{array}{ll}
T_0-\gamma I_n= Q R \quad ({\rm QR} \ \  {\rm  factorization}); \\
T_1=R Q + \gamma I_n.\end{array}\right.
\]
Assume that the subdiagonal blocks of both $T_0$ and $T_1$ are nonzero and 
that $T_0-\gamma I_n$ is nonsingular.  Then the matrix $T_1$ is block
tridiagonal with the same shape as $T_0$.
\end{thm}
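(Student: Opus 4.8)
The plan is to exploit the fact that one shifted QR step is simultaneously a unitary similarity and an upper--triangular conjugation. Since $Q$ is unitary, from $T_0-\gamma I_n=QR$ I get $Q=(T_0-\gamma I_n)R^{-1}$ and hence
\[
T_1=RQ+\gamma I_n=R(T_0-\gamma I_n)R^{-1}+\gamma I_n=R\,T_0\,R^{-1}=Q^H T_0\,Q,
\]
the last two expressions being equal because $T_0$ commutes with every function of itself. The rightmost form shows that $T_1$ is again unitary, so $T_1^{-1}=T_1^H$, while the form $T_1=R\,T_0\,R^{-1}$ with $R$ upper triangular is the one I would use to propagate the rank structure. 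The goal is then to prove that $T_1$ is block tridiagonal with the same $2\times2$ partition and with rank--one off--diagonal blocks, i.e.\ that it again has the CMV--like shape of Definition~\ref{cmv-like}.

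First I would show that $T_1$ is block upper Hessenberg. Since $T_0$ is block tridiagonal its sub--subdiagonal blocks vanish, and left multiplication by the (block) upper triangular $R$ followed by right multiplication by the upper triangular $R^{-1}$ cannot create entries below the block subdiagonal; hence $R\,T_0\,R^{-1}$ is block upper Hessenberg. The key step is the preservation of the lower--left corner ranks. Splitting rows at $2k+1$ and columns at $2k+2$ and using that $R$ and $R^{-1}$ are upper triangular, the factorization localizes as
\[
T_1(2k+1\colon n;1\colon 2k+2)=R(2k+1\colon n;2k+1\colon n)\;T_0(2k+1\colon n;1\colon 2k+2)\;R^{-1}(1\colon 2k+2;1\colon 2k+2).
\]
Both outer factors are square principal triangular blocks of the nonsingular triangular matrices $R$ and $R^{-1}$ (nonsingular because $T_0-\gamma I_n$ is), hence invertible, and multiplication by invertible square matrices leaves the rank unchanged. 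Therefore $\rank(T_1(2k+1\colon n;1\colon 2k+2))=\rank(T_0(2k+1\colon n;1\colon 2k+2))$, and by the very computation used to produce \eqref{rr1} (the case $k=1$ is written out in the text, the general $k$ being identical) this common value equals $2$.

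Finally I would close exactly as in the derivation of \eqref{rr1}. Using $T_1^{-1}=T_1^H$ and the Nullity Theorem~\ref{nullycons} with $\B \alpha=\{2k+3,\ldots,n\}$ and $\B \beta=\{1,\ldots,2k\}$,
\[
\rank(T_1(1\colon 2k;2k+3\colon n))=\rank(T_1^{H}(2k+3\colon n;1\colon 2k))=\rank(T_1(2k+1\colon n;1\colon 2k+2))-2=0.
\]
Thus all blocks of $T_1$ strictly above the block superdiagonal vanish, so $T_1$ is also block lower Hessenberg; together with the previous step this makes $T_1$ block tridiagonal with the same shape as $T_0$. The rank--one property of its off--diagonal blocks, and with it the full structure \eqref{rr1}, then follows by replaying the Nullity--Theorem argument of Corollary~\ref{mainc2} for $T_1$, the hypothesis that the subdiagonal blocks of $T_1$ are nonzero being exactly what promotes the corner rank inequalities into the sharp equalities.

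The step I expect to be the main obstacle is the corner--rank identity of the second paragraph: one must verify that the triangular factorization really localizes into the displayed product of square blocks and that those blocks are genuinely invertible, which is precisely where the assumption that $T_0-\gamma I_n$ is nonsingular enters. A secondary point is bookkeeping the two nonzero--subdiagonal hypotheses, on $T_0$ to fix $\rank(T_0(2k+1\colon n;1\colon 2k+2))=2$ and on $T_1$ to guarantee the rank--one off--diagonal blocks at the end.
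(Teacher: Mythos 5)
Your proof is correct and follows essentially the same route as the paper's: conjugation by the upper triangular $R$ preserves the shape and the corner ranks of the lower triangular part, and the Nullity Theorem applied to the unitary iterate $T_1$ converts those lower corner ranks into the vanishing of everything above the block superdiagonal. The only (harmless) difference is the choice of corners: the paper works with the odd-split corners $T(3\colon n;1\colon 3)$, etc., of rank one, so that the resulting zero blocks $T_1(1\colon 2s;2(s+1)\colon n)$ also wipe out the second column of each superdiagonal block and hence deliver the rank-one property of the off-diagonal blocks in the same stroke, whereas your even-split corners of rank two yield block tridiagonality first and need the separate closing step for the rank-one blocks.
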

\begin{proof}
From $T_1=R T_0 R^{-1}$ where $R$ is upper triangular it follows that both the
 shape  and the rank properties of  the
lower triangular  portion  of the matrix $T_0$ are preserved in the matrix $T_1$.  Then 
by using Theorem \ref{nullycons} and property \eqref{rr1}  we obtain that 
\[
\rank(T_1(1\colon 2; 4\colon n))=\rank(T_1^H(4\colon n; 1\colon 2))=
\rank(T_1(3\colon n; 1\colon 3)) -1.
\]
We have that $\rank(T_1(3\colon n; 1\colon 3)) =\rank(T_0(3\colon n; 1\colon 3)) =1$ as a consequence of the CMV-like structure of $T_0$ and for \eqref{rr1}.
This says that $T_1(1\colon 2; 4\colon n)$ is a zero matrix.  The same argument applies to any
submatrix $T_1(1\colon 2s; 2(s+1) \colon n)$, $1\leq s\leq \ell-2$,   
by yielding the CMV-like structure of $T_1$.

\end{proof}

The assumption about the invertibility of  $ T_0-\gamma I_n$ is just for the sake of simplicity. In the
singular case  we can get the same conclusion  by a careful looking at the  elementary transformations involved
in the passage from $T_0$ to $T_1$ (compare with \cite{DV_sing} for a general treatment of rank structures under the
QR process in the singular case). 

The conclusion of Theorem \eqref{maininv} immediately generalizes  to almost unitary matrices of the form
  $A=U +\B z\B w^H$, with $U$ unitary, including the class of companion matrices.
 Indeed,  if we apply  the {\bf  Unitary\_CMV\_Reduction} to the matrix $U$ with
initial vectors $\B z, U\B z$, then it is shown that $U$ is reduced to CMV-like shape and simultaneously
the rank-one correction is  converted to a  rank-one matrix with only the first nonzero row.  In this way, the
transformed matrix $B=Q^H A Q=T + \B e_1 \B v^H$ is block upper Hessenberg with subdiagonal blocks of rank one.
Setting $T_0=T$, $C_0=\B e_1 \B v^H$, and applying a $QR$ step to matrix $B_0=B$, we have
$$
B_1=T_1+C_1,
$$
where $T_1=Q^HT_0Q$ is unitary and  $C_1=Q^HC_0Q$ is a rank one matrix.
The following facts hold. 

\begin{itemize}
\item Since the lower shape is maintained under the QR  scheme, $B_1$ is  block upper Hessenberg with subdiagonal blocks of rank one.
\item $T_1$ has a rank--one structure in the lower triangular portion out of the diagonal blocks, due to the fact that $B_1$ is block Hessenberg with rank one subdiagonal blocks.
\item $\rank(T_1(3:n,1:4))=3$, and for the Nullity Theorem, $\rank(T_1(1:2, 5:n))=1$.
\item Then, $\rank(B_1(1:2, 5:n))\le 2$.
\end{itemize}
This reasoning can be repeated at each $QR$ step obtaining that the matrix $B$ generated at each  step  has a rank--two
structure in the upper triangular portion out of the block tridiagonal profile.



As an illustration of the structural properties  of the CMV-like reduction
under the QR process, we  consider in
Figure \ref{f4}  the unitary matrix $T$
computed  after 32 QR  steps  applied  to  the matrix $B$ determined from a companion matrix $A$
generated by the Matlab command ${\it compan}$. Specifically, we show the shape of the matrices  $H$ and $S$
such that $T\cdot S=H$ and $S$ can be represented as product of a linear number of $2\times 2$ Givens rotation matrices.
This factorization is an intermediate step in the computation of a QR factorization of $T$  \cite{EG02} and
clearly put in evidence the rank-one structure  in the upper triangular portion of the matrix $T$.

\begin{figure}
 \begin{minipage}[b]{6.0cm}
   \centering
   \includegraphics[width=7cm]{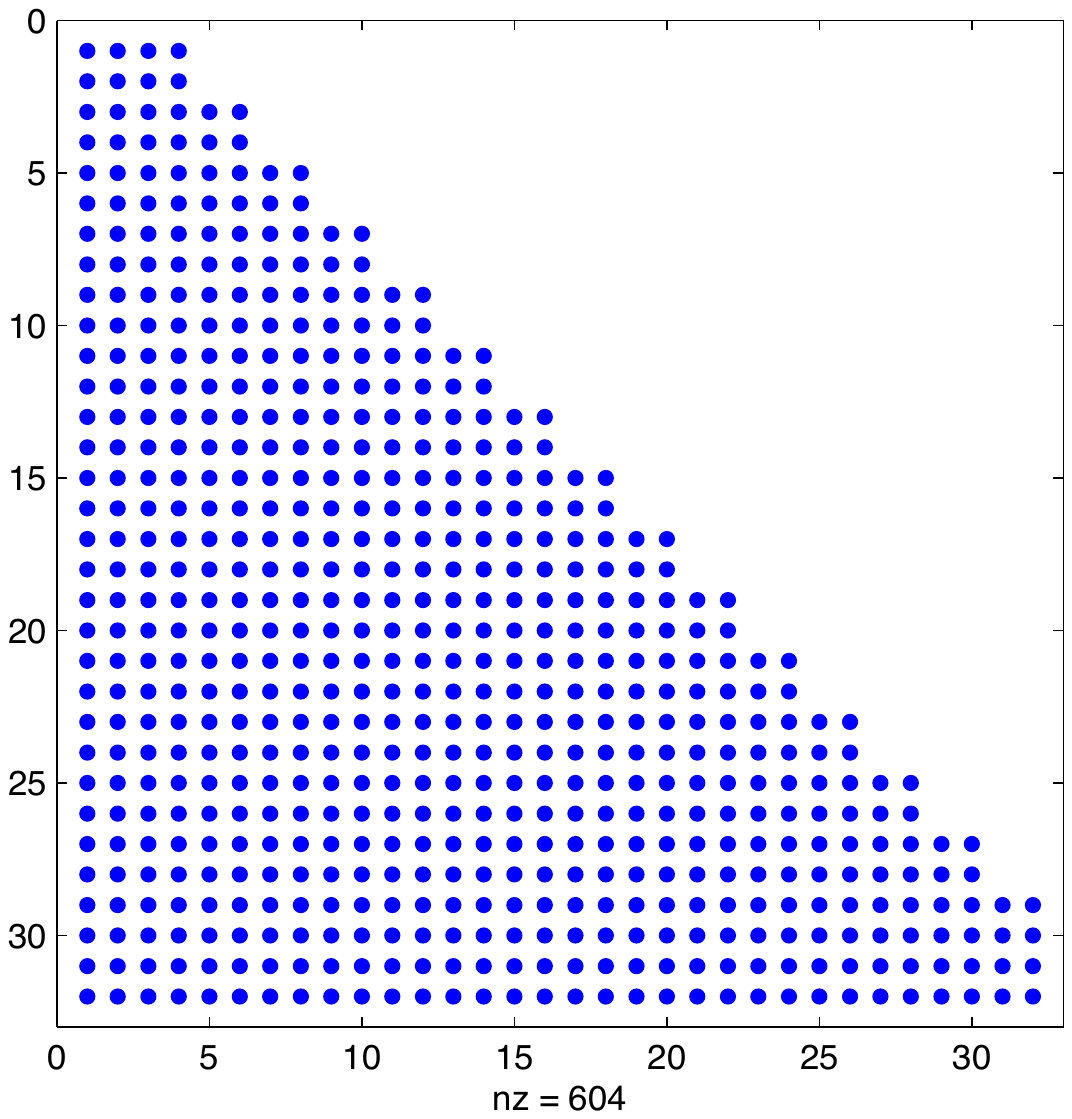}
   \caption{Shape of the matrix $H$}
\label{ff1}
 \end{minipage}
 \hspace{20mm}
 \begin{minipage}[b]{6cm}
  \centering
   \includegraphics[width=7.25cm]{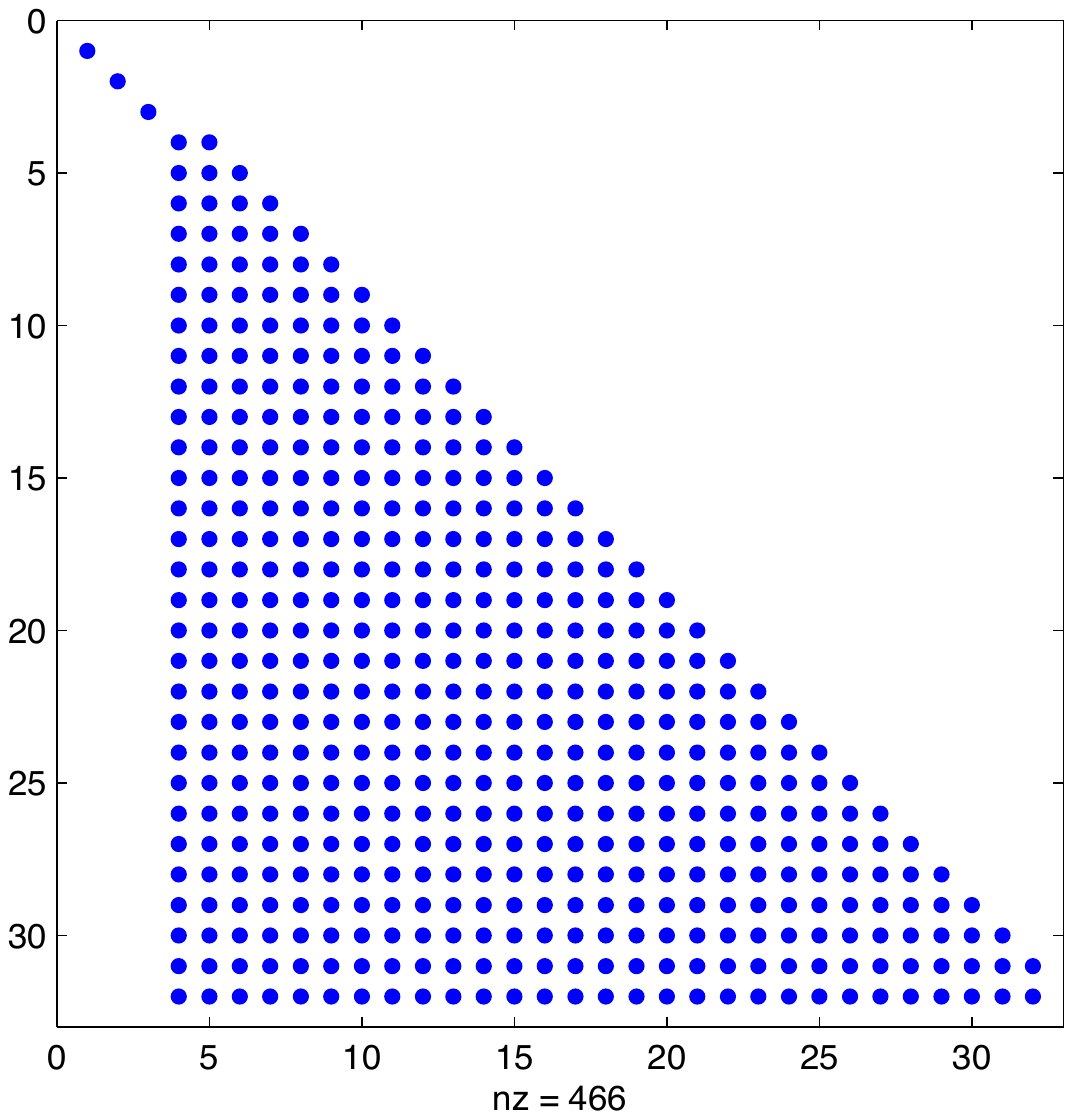}\vspace{2.5mm}
   \caption{Shape of the matrix $S$}
 \end{minipage}
\caption{Illustration of the upper rank-one structure of $T$}
\label{f4}
\end{figure}

\section{Conclusion and future work}
In this paper we have presented fast numerical methods for reducing a given
 unitary matrix into the direct sum of CMV-like matrices.   The occurrence of
potential rank  structures  in the matrix $U$ can be exploited  by improving the computational cost of the reduction.
As the  resulting sparse representation of $U$
 is invariant under the QR method  the proposed reduction can naturally  be applied for the
fast solution of the unitary rank structured eigenvalue problems.

Some theoretical and computational issues are currently being investigated. A
first interesting question is related
with the completion problem for unitary matrices  whose block lower Hessenberg
profile  is only known (compare with \cite{BEGG_comp}). 
Other issues  concern the numerical treatment,
representation and compression of  out-of-band semiseparable   unitary matrices \cite{EG_band}. In particular, 
the rank--one structure  of the matrices generated under the QR method applied to a transformed companion matrix 
can provide an easy, numerically efficient representation  to be exploited for the design of 
 fast matrix--based polynomial rootfinding methods. The preliminary transformation of  a companion matrix 
 into a
perturbed CMV-like form  can
provide alternative algorithms with better numerical and computational features.


\begin{thebibliography}{10}

\bibitem{Gragg3}
G.~S. Ammar, W.~B. Gragg, and C.~He.
\newblock An efficient {QR} algorithm for a {H}essenberg submatrix of a unitary
  matrix.
\newblock In {\em New directions and applications in control theory}, volume
  321 of {\em Lecture Notes in Control and Inform. Sci.}, pages 1--14.
  Springer, Berlin, 2005.

\bibitem{BEGG_simax}
D.~A. Bini, Y.~Eidelman, L.~Gemignani, and I.~Gohberg.
\newblock Fast {QR} eigenvalue algorithms for {H}essenberg matrices which are
  rank-one perturbations of unitary matrices.
\newblock {\em SIAM J. Matrix Anal. Appl.}, 29(2):566--585, 2007.

\bibitem{BEGG_mc}
D.~A. Bini, Y.~Eidelman, L.~Gemignani, and I.~Gohberg.
\newblock The unitary completion and {QR} iterations for a class of structured
  matrices.
\newblock {\em Math. Comp.}, 77(261):353--378, 2008.

\bibitem{BEGG_comp}
D.~A. Bini, Y.~Eidelman, L.~Gemignani, and I.~Gohberg.
\newblock The unitary completion and {QR} iterations for a class of structured
  matrices.
\newblock {\em Math. Comp.}, 77(261):353--378, 2008.

\bibitem{BE91}
A.~Bunse-Gerstner and L.~Elsner.
\newblock Schur parameter pencils for the solution of the unitary eigenproblem.
\newblock {\em Linear Algebra Appl.}, 154/156:741--778, 1991.

\bibitem{CMV}
M.~J. Cantero, L.~Moral, and L.~Vel{\'a}zquez.
\newblock Five-diagonal matrices and zeros of orthogonal polynomials on the
  unit circle.
\newblock {\em Linear Algebra Appl.}, 362:29--56, 2003.

\bibitem{DV_sing}
S.~Delvaux and M.~Van~Barel.
\newblock Rank structures preserved by the {$QR$}-algorithm: the singular case.
\newblock {\em J. Comput. Appl. Math.}, 189(1-2):157--178, 2006.

\bibitem{DVB}
S.~Delvaux and M.~Van~Barel.
\newblock Eigenvalue computation for unitary rank structured matrices.
\newblock {\em J. Comput. Appl. Math.}, 213(1):268--287, 2008.

\bibitem{EG02}
Y.~Eidelman and I.~Gohberg.
\newblock A modification of the {D}ewilde-van der {V}een method for inversion
  of finite structured matrices.
\newblock {\em Linear Algebra Appl.}, 343/344:419--450, 2002.
\newblock Special issue on structured and infinite systems of linear equations.

\bibitem{EG_band}
Y.~Eidelman and I.~Gohberg.
\newblock Out-of-band quasiseparable matrices.
\newblock {\em Linear Algebra Appl.}, 429(1):266--289, 2008.

\bibitem{EGG}
Y.~Eidelman, I.~Gohberg, and L.~Gemignani.
\newblock On the fast reduction of a quasiseparable matrix to {H}essenberg and
  tridiagonal forms.
\newblock {\em Linear Algebra Appl.}, 420(1):86--101, 2007.

\bibitem{FM}
M.~Fiedler and T.~L. Markham.
\newblock Completing a matrix when certain entries of its inverse are
  specified.
\newblock {\em Linear Algebra Appl.}, 74:225--237, 1986.

\bibitem{MC}
G.~H. Golub and C.~F. Van~Loan.
\newblock {\em Matrix computations}.
\newblock Johns Hopkins Studies in the Mathematical Sciences. Johns Hopkins
  University Press, Baltimore, MD, third edition, 1996.

\bibitem{KN}
R.~Killip and I.~Nenciu.
\newblock C{MV}: the unitary analogue of {J}acobi matrices.
\newblock {\em Comm. Pure Appl. Math.}, 60(8):1148--1188, 2007.

\bibitem{Tisse}
F.~Tisseur.
\newblock Backward stability of the {QR} algorithm.
\newblock Technical Report 239, UMR 5585 Lyon Saint-Etienne, 1996.

\bibitem{VW13}
R.~Vandebril and D.~S. Watkins.
\newblock An extension of the qz algorithm beyond the hessenberg-upper
  triangular pencil.
\newblock {\em Electronic Transactions on Numerical Analysis}, 40:17--35, 2013.

\bibitem{Gragg2}
T.~L. Wang and W.~B. Gragg.
\newblock Convergence of the shifted {$QR$} algorithm for unitary {H}essenberg
  matrices.
\newblock {\em Math. Comp.}, 71(240):1473--1496, 2002.

\bibitem{Gragg1}
T.~L. Wang and W.~B. Gragg.
\newblock Convergence of the unitary {QR} algorithm with a unimodular
  {W}ilkinson shift.
\newblock {\em Math. Comp.}, 72(241):375--385, 2003.

\end{thebibliography}

\end{document}